\newtheorem{corollary}{Corollary}%
\newtheorem{theorem}{Theorem}%
\begin{document}

\baselineskip=4.4mm

\makeatletter

\newcommand{\E}{\mathrm{e}\kern0.2pt} 
\newcommand{\D}{\mathrm{d}\kern0.2pt}
\newcommand{\RR}{\mathbb{R}}
\newcommand{\CC}{\mathbb{C}}%
\newcommand{\ii}{\kern0.05em\mathrm{i}\kern0.05em}

\renewcommand{\Re}{\mathrm{Re}} 
\renewcommand{\Im}{\mathrm{Im}}

\def\bottomfraction{0.9}

\title{\bf Metaharmonic functions: mean flux theorem, \\ its converse and related
properties}

\author{Nikolay Kuznetsov}

\date{}

\maketitle


\begin{center}
Laboratory for Mathematical Modelling of Wave Phenomena, \\ Institute for Problems
in Mechanical Engineering, Russian Academy of Sciences, \\ V.O., Bol'shoy pr. 61,
St. Petersburg 199178, Russian Federation \\ E-mail address:
nikolay.g.kuznetsov@gmail.com
\end{center}

\begin{abstract}
The mean flux theorems are proved for solutions of the Helmholtz equation and its
modified version. Also, their converses are considered along with some other
properties which generalise those that guarantee harmonicity.
\end{abstract}

\setcounter{equation}{0}


\section{Introduction}

`Metaharmonic function' is a convenient (though, out of common use) abbreviation for
`solution of the Helmholtz equation', like `harmonic function' is a widely used
equivalent to `solution of the Laplace equation' (see \cite{ABR}, p.~25, about the
origin of the term `harmonic'). Presumably, the Helmholtz equation
\begin{equation}
\nabla^2 u + \lambda^2 u = 0 , \quad \lambda \in \CC , \label{Hh}
\end{equation}
has the next level of complexity comparing with the Laplace equation $\nabla^2 u =
0$ (here and below $\nabla = (\partial_1, \dots , \partial_m)$ is the gradient
operator and $\partial_i = \partial / \partial x_i$), and so the Greek prefix {\it
meta-} (equivalent to the Latin {\it post-}) looks appropriate for characterising
solutions of equation \eqref{Hh}. Presumably, the term was coined by I.~N. Vekua in
his still widely cited article \cite{Ve1}; its English translation can be found as
Appendix~2 in the book \cite{Ve2}. A pdf file of this paper is also available online
at: 

ftp://ftp.math.ethz.ch/hg/EMIS/journals/TICMI/lnt/vol14/vol14.pdf

A few words about early studies of equation \eqref{Hh}. It was briefly considered by
Euler and Lagrange in their treatments of sound propagation and vibrating membranes,
respectively, dating back to 1759. However, it was Helmholtz who initiated its
detailed investigation in his article \cite{Helm} published in 1860 and dealing with
sound waves in a tube with one open end (organ pipe). An essential point of this
paper is the representation formula for solutions of \eqref{Hh} in a domain $D$ (an
open and connected subset of $\RR^m$, if not stated otherwise), which is similar to
Green's representation of harmonic functions. This opened the way to deriving mean
value properties for metaharmonic functions accomplished by Weber in his papers
\cite{W1} and \cite{W2}, where the following formulae for spheres
\begin{equation}
u (x) = \frac{\lambda r}{4 \pi r^2 \sin \lambda r} \int_{\partial B_r (x)} \!\! u \,
\D S_y \quad \mbox{and} \quad u (x) = \frac{1}{2 \pi r J_0 (\lambda r)} \int_{\partial
B_r (x)} \!\! u \, \D S_y \label{We}
\end{equation}
were obtained for $\lambda > 0$ in the three- and two-dimensional case, respectively
(see also \cite{CH}, pp.~288 and 289, respectively). Here and below $J_\nu$ denotes
the Bessel function of order $\nu$ and $B_r (x) = \{ y : |y-x| < r \}$ is the open
ball of radius $r$ centred at $x$ with $\partial B_r (x) \subset D$. In the
$m$-dimensional formula generalising \eqref{We} to arbitrary $m \geq 2$, the
coefficient on the right-hand side is
\begin{equation}
b (\lambda, r) = \frac{1}{\lambda J_{(m-2)/2} (\lambda r)} \left( \frac{\lambda}{2
\pi r} \right)^{m/2} \, ; \label{co}
\end{equation}
see Theorem~4.6 in the recent survey \cite{Ku}, where further references are
given. 

Formulae \eqref{We} and \eqref{co} are somewhat inconvenient, because require extra
explanations at zeros of the Bessel function. Therefore, it is better to write the
mean value formula for spheres in a slightly different form:
\begin{equation}
a^\circ (\lambda r) \, u (x) = \frac{1}{|\partial B_r|} \int_{\partial B_r (x)} \!\!
u (y) \, \D S_y = M^\circ (u, x, r) \label{G}
\end{equation}
(the last equality defines $M^\circ$---the mean value for spheres). Here $|\partial
B_r|$ is the area of $\partial B_r$ equal to $2 \pi^{m/2} r^{m-1} / \Gamma (m/2)$,
\begin{equation}
a^\circ (\lambda r) = \Gamma \left( \frac{m}{2} \right) \frac{J_{(m-2)/2} (\lambda
r)}{(\lambda r / 2)^{(m-2)/2}} \label{a}
\end{equation}
and $\Gamma$ denotes the gamma function. The definition of $J_\nu (z)$ implies that
$a^\circ (z) \to 1$ as $z \to 0$. Since
\begin{equation}
[z^{-\nu} J_\nu (z)]' = - z^{-\nu} J_{\nu+1} (z) \label{diff}
\end{equation}
(see \cite{Wa}, p.~66), the function $a^\circ$ decreases monotonically on $(0,
j_{m/2,1})$ (as usual $j_{\nu,n}$ denotes the $n$th positive zero of $J_\nu$), and
is positive on the smaller interval $(0, j_{(m-2)/2,1})$.

Along with the mean property for spheres described above, metaharmonic functions
have analogous property for balls. As for harmonic functions, one has to integrate
the formula for spheres with respect to $r$ (of course, the coefficient \eqref{co}
must be moved to the left-hand side beforehand). To obtain the result one has to use
formula 1.8.1.21, \cite{PBM}, which involves two different signs. Since both its
forms are used in the paper, we reproduce it and specify which sign is applied here
and which one below:
\begin{equation}
\int_0^x x^{1 \pm \nu} J_\nu (x) \, \D x = \pm x^{1 \pm \nu} J_{\nu \pm 1} (x) +
\frac{2^{1-\nu}}{\Gamma (\nu)} \left\{ \begin{array}{rr} \!\!\! 0 \!\!\! \\ \!\!\! 1
\!\!\! \end{array} \right\} , \quad \left\{ \begin{array}{rr} \!\!\! \Re \nu > -1 \\
\!\!\! \mathrm{arbitrary} \ \nu \!\!\! \end{array} \right\} . \label{PBM}
\end{equation}
Using this formula with the upper sign and $\nu = (m-2)/2$ while integrating the
spherical mean formula over an {\it admissible}\/ ball, that is, its closure
$\overline{B_r (x)} \subset D$, we obtain the following result:
\begin{equation}
\left( \frac{2 \pi r}{\lambda} \right)^{m/2} J_{m/2} (\lambda r) \, u (x) =
\int_{|y| < r} \!\! u (x+y) \, \D y . \label{Rh}
\end{equation}
This equality, as well as \eqref{We} and \eqref{G}, is valid for every $u \in C^2
(D)$, which satisfies \eqref{Hh} in a domain $D \subset \RR^m$, $m \geq 2$, and for
all admissible balls.

Formula \eqref{Rh} can be written in the form
\begin{equation}
a^\bullet (\lambda r) \, u (x) = \frac{1}{|B_r|} \int_{B_r (x)} \!\! u (y) \, \D y =
M^\bullet (u, x, r) \label{G'}
\end{equation}
(the last equality defines $M^\bullet$---the mean value for balls), which is
analogous to \eqref{G}. Here $|B_r|$ is the volume of $B_r$ and
\begin{equation}
a^\bullet (\lambda r) = \Gamma \left( \frac{m}{2} + 1 \right) \frac{J_{m/2} (\lambda
r)}{(\lambda r / 2)^{m/2}} \label{a'}
\end{equation}
is similar to \eqref{a}, and so $a^\bullet$ has the same properties as $a^\circ$.

Let us turn to the notion of {\it flux} having its origin in hydrodynamics. The
so-called zero flux property is an obvious consequence of harmonicity of a function
$u$ in a domain $D \subset \RR^m$, $m \geq 2$. Indeed, {\it if $D'$ is an arbitrary
bounded subdomain of $D$ such that $\overline{D'} \subset D$ and $\partial D'$ is
piecewise smooth, then}
\begin{equation}
\int_{\partial D'} \frac{\partial u}{\partial n_y} \, \D S_y = 0 \, .\label{D}
\end{equation}
Here and below $n$ denotes the exterior normal to smooth (of the class $C^1$) parts
of domains' boundaries. The integral is known as the flux through $\partial D'$
because $u$, interpreted as the velocity potential, describes an irrotational flow
of an inviscid, incompressible fluid in $D \subset \RR^m$, $m = 2,3$. In the absence
of sources and sinks, the influx is equal to outflux for every subdomain $D' \subset
D$ which is expressed by \eqref{D}.

In 1906, B\^ocher \cite{Bo} and Koebe \cite{Ko} independently discovered the
classical converse to the above assertion in two and three dimensions, respectively
(see also \cite{K}, p.~227); its $m$-dimensional version is as follows.

\begin{theorem}[B\^ocher, Koebe]
Let $D$ be a bounded domain in $\RR^m$, $m \geq 2$. If $u$ belonging to $C^0
(\overline D) \cap C^1 (D)$ satisfies \eqref{D} for every admissible ball, then $u$
is harmonic in $D$.
\end{theorem}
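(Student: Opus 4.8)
The plan is to show that the zero--flux hypothesis \eqref{D} forces the spherical mean value property, and then to recover harmonicity from the latter by a standard regularisation argument; only the interior regularity $u \in C^1 (D)$ and the continuity of $u$ will actually be used.

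First I would connect the flux across a sphere with the radial derivative of the spherical mean $M^\circ (u, x, r)$ introduced in \eqref{G}. Fix $x \in D$ and an admissible ball $\overline{B_r (x)} \subset D$, and parametrise $\partial B_r (x)$ by $y = x + r \omega$ with $|\omega| = 1$. The exterior normal at $y$ is $n_y = \omega$, so that $\partial u / \partial n_y = \nabla u (x + r \omega) \cdot \omega = \partial_r [u (x + r \omega)]$. Since $u \in C^1 (D)$, one may differentiate under the integral sign, and writing $M^\circ (u, x, r) = |\partial B_1|^{-1} \int_{|\omega| = 1} u (x + r \omega) \, \D S_\omega$ one obtains
\[
\frac{\partial}{\partial r} \, M^\circ (u, x, r) = \frac{1}{|\partial B_r|} \int_{\partial B_r (x)} \frac{\partial u}{\partial n_y} \, \D S_y \, .
\]
By hypothesis \eqref{D} applied to $D' = B_r (x)$ the right-hand side vanishes for every admissible $r$, so $M^\circ (u, x, r)$ is independent of $r$. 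Letting $r \to 0+$ and using the continuity of $u$ gives $M^\circ (u, x, r) = u (x)$ for all admissible balls, i.e. $u$ has the spherical mean value property.

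Next I would bootstrap the regularity of $u$ from this property. Let $\varphi_\varepsilon (y) = \varepsilon^{-m} \varphi (|y| / \varepsilon)$ be a radial mollifier with $\varphi$ smooth, supported in the unit ball, and $\int \varphi_\varepsilon = 1$. For $x$ at distance greater than $\varepsilon$ from $\partial D$, passing to polar coordinates and inserting $M^\circ (u, x, \rho) = u (x)$ for each $\rho \in (0, \varepsilon)$ yields $(u * \varphi_\varepsilon)(x) = u (x)$; because the convolution on the left is of class $C^\infty$, so is $u$ on $\{ x : \mathrm{dist}(x, \partial D) > \varepsilon \}$. As $\varepsilon > 0$ is arbitrary, $u \in C^\infty (D)$, and in particular $u \in C^2 (D)$.

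Finally, with $u$ now twice continuously differentiable, I would apply the divergence theorem to the flux integral: for every admissible ball
\[
0 = \int_{\partial B_r (x)} \frac{\partial u}{\partial n_y} \, \D S_y = \int_{B_r (x)} \nabla^2 u (y) \, \D y \, .
\]
Since this holds for all $\overline{B_r (x)} \subset D$ and $\nabla^2 u$ is continuous, dividing by $|B_r|$ and letting $r \to 0$ gives $\nabla^2 u \equiv 0$ in $D$, so $u$ is harmonic. The main obstacle is the regularisation step: the hypothesis, and even the mean value property it produces, carry no a priori smoothness beyond $C^1$, and it is precisely the radial symmetry of the mollifier combined with the mean value identity that upgrades $u$ to $C^\infty$ and thereby makes the concluding pointwise Laplace equation legitimate.
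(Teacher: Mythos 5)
Your proof is correct. Note that the paper itself gives no proof of Theorem~1 --- it is the classical B\^ocher--Koebe result, quoted with references to the original 1906 papers and to Kellogg's book --- so there is no in-paper argument to match line by line. What your write-up actually reproduces is the $\lambda = 0$ specialisation of the scheme the paper deploys for the metaharmonic analogues: the paper's Theorem~3 reduces the flux hypothesis to the spherical mean-value identity \eqref{G} by integrating in the radius (you achieve the same reduction in the cleaner harmonic setting by observing that the normalised flux equals $\partial_r M^\circ(u,x,r)$, so zero flux forces $M^\circ(u,x,r) = u(x)$), and the paper's Theorem~4 then runs exactly your two remaining steps: Mikhlin's radial-mollifier argument, $(u * \varphi_\varepsilon)(x) = u(x)$ on the set at distance greater than $\varepsilon$ from $\partial D$, to upgrade $u$ to $C^\infty(D)$, followed by Green's first formula on admissible balls and a shrinking-ball limit (the paper phrases this via the mean value theorem for integrals, choosing $y(r,x) \in B_r(x)$ with vanishing integrand and letting $r \to 0$; your division by $|B_r|$ with continuity of $\nabla^2 u$ is the same limit). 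All the delicate points are handled properly: differentiation under the integral sign is legitimate for $u \in C^1(D)$ since $\nabla u(x + r\omega)\cdot\omega$ is continuous on compact parameter sets, the mollifier identity uses only $\int \varphi_\varepsilon = 1$ together with the mean-value identity for all $\rho \in (0,\varepsilon)$, and you correctly flag that the regularisation step is where the a priori $C^1$ hypothesis gets upgraded so that the pointwise Laplace equation makes sense. One remark: your radial-derivative identity $\partial_r M^\circ(u,x,r) = F(u,x,r)$ is, at $\lambda = 0$, precisely the relation underlying the paper's \eqref{F}--\eqref{FM}, so your argument also illuminates why the paper's metaharmonic mean flux formula \eqref{Dm} degenerates to the zero flux property \eqref{D} in the harmonic limit.
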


To the best author's knowledge, there is no generalisation of equality \eqref{D} to
metaharmonic functions, to say nothing of an assertion analogous to Theorem~1 for
these functions. Therefore, the main aim of this paper is twofold:
\begin{itemize}
\item to find a relation similar to \eqref{D} for the flux of a metaharmonic
function; \item to obtain an analogue of the B\^ocher--Koebe theorem providing a
sufficient condition of metaharmonicity on the basis of this relation.
\end{itemize}
Also, some other assertions about mean value properties of harmonic functions will
be generalised to the case of metaharmonic functions as well as to solutions of the
modified Helmholtz equation (it differs from \eqref{Hh} by the coefficient's sign).

Recently, another characterization of harmonic functions was obtained in \cite{E};
it involves the so-called polynomial mean flow integrals around a single point. It
would be interesting to investigate this property for metaharmonic functions.

\section{Analogues of formula (4) and Theorem 1 \\ for metaharmonic functions}

While the zero flux property of harmonic functions is an immediate consequence of
Green's first identity for the Laplacian, one more step is required to prove the
following {\it mean flux theorem}\/ for metaharmonic functions.

\begin{theorem}
Let $u \in C^2 (D)$ be metaharmonic in a domain $D \subset \RR^m$, $m \geq 2$. Then
the following equality
\begin{equation}
\int_{\partial B_r (x)} \frac{\partial u}{\partial n_y} \, \D S_y = - \lambda^2
\left( \frac{2 \pi r}{\lambda} \right)^{m/2} J_{m/2} (\lambda r) \, u (x)
\label{Dm}
\end{equation}
holds for every admissible ball $B_r (x)$.
\end{theorem}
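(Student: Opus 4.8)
The plan is to reduce the surface flux to a volume integral by the divergence theorem, then use the Helmholtz equation \eqref{Hh} to rewrite the integrand, and finally evaluate the resulting ball mean by the already-established formula \eqref{Rh}. This is exactly the ``one more step'' beyond the harmonic case hinted at in the text: for harmonic $u$ the volume integral vanishes identically, whereas here it does not.

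First I would apply Green's first identity (equivalently, the divergence theorem to $\nabla u$) over the admissible ball $B_r (x)$, with the constant $1$ as test function. Since $u \in C^2 (D)$ and $\overline{B_r (x)} \subset D$, the hypotheses are met, the only boundary contribution is the flux, and the interior term is the integral of the Laplacian:
\[
\int_{\partial B_r (x)} \frac{\partial u}{\partial n_y} \, \D S_y = \int_{B_r (x)} \nabla^2 u (y) \, \D y .
\]
Next, metaharmonicity \eqref{Hh} permits the pointwise substitution $\nabla^2 u = - \lambda^2 u$, giving
\[
\int_{\partial B_r (x)} \frac{\partial u}{\partial n_y} \, \D S_y = - \lambda^2 \int_{B_r (x)} u (y) \, \D y .
\]

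It then remains only to evaluate the ball mean on the right, and this is precisely the quantity supplied by \eqref{Rh}, namely $\int_{B_r (x)} u (y) \, \D y = (2 \pi r / \lambda)^{m/2} J_{m/2} (\lambda r) \, u (x)$ (after the change of variables $y \mapsto x + y$). Inserting this identity yields \eqref{Dm} at once. I expect no genuine obstacle in this argument: the only substantive ingredient beyond the classical zero flux property \eqref{D} is the ball mean value formula \eqref{Rh}, which is already in hand. Thus the whole proof amounts to combining the divergence theorem, the equation, and \eqref{Rh} in that order.
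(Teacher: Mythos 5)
Your proposal is correct and follows essentially the same route as the paper: the paper likewise integrates \eqref{Hh} over the admissible ball, invokes Green's first identity to obtain \eqref{DD}, and then applies the ball mean value formula \eqref{Rh} to conclude \eqref{Dm}. No gaps; your explicit verification of the hypotheses for the divergence theorem is a minor elaboration of the same argument.
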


\begin{proof}
Let us integrate \eqref{Hh} over an admissible ball $B_r (x)$. In view of Green's
first identity for the Laplacian we obtain
\begin{equation}
\int_{\partial B_r (x)} \frac{\partial u}{\partial n_y} \, \D S_y = - \lambda^2
\int_{B_r (x)} \!\! u \, \D y \, . \label{DD}
\end{equation}
Now, \eqref{Rh} yields the required equality.
\end{proof}

It is straightforward to write \eqref{Dm} in the form similar to \eqref{G}, namely:
\begin{equation}
- \lambda \Gamma \left( \frac{m}{2} \right) \frac{J_{m/2} (\lambda r)}{(\lambda r /
2)^{(m-2)/2}} \, u (x) = \frac{1}{|\partial B_r|} \int_{\partial B_r (x)}
\frac{\partial u}{\partial n_y} \, \D S_y = F (u, x, r) \label{F}
\end{equation}
(the last equality defines $F$---the mean flux). Here the coefficient at $u (x)$
tends to zero as $\lambda \to 0$, and so this equality turns into the zero flux
property of harmonic functions in the limit. From \eqref{F} and \eqref{G}, the
relation 
\begin{equation}
J_{(m-2)/2} (\lambda r) F (u, x, r) = - \lambda J_{m/2} (\lambda r) M^\circ (u, x,
r) \label{FM}
\end{equation}
follows. Thus, the mean flux of a metaharmonic function is expressed in terms of its
mean value for spheres unless $\lambda r$ is a zero of either $J_{m/2}$ or
$J_{(m-2)/2}$.

Let us compare \eqref{Dm} with the zero flux property \eqref{D}, for which purpose
we consider when the left-hand side of the former equality vanishes. First, this
takes place when $r = j_{m/2,n} / \lambda$ provided the domain accommodates balls
centred at $x \in D$ that have these values of radius. Second, the flux vanishes for
all spheres centred at $x$ provided $u (x) = 0$. To illustrate the last case, let us
consider the function $|x|^{-1} \sin |x|$, which is metaharmonic on $\RR^3$ with
$\lambda = 1$. Since this function vanishes on every sphere $\partial B_{\pi k}
(0)$, $k=1,2,\dots$, its flux vanishes for each sphere, whose centre lies on
$\partial B_{\pi k} (0)$ for some $k$. For other locations of centre and $r \neq
j_{m/2,n}$, the mean flux of this function is non-zero.

To demonstrate an essential distinction between \eqref{Dm} and the zero flux
property of harmonic functions, let us consider functions metaharmonic on the whole
$\RR^m$, $m \geq 2$, when, in view of self-similarity, it is sufficient to put
$\lambda = 1$. For such a function the flux has following asymptotic behaviour as
$r \to \infty$:
\begin{equation*}
\int_{\partial B_r (x)} \frac{\partial u}{\partial n_y} \, \D S_y = 2^{(m+1) / 2} u
(x) \left( \pi r \right)^{\frac{m-1}{2}} \! \cos \big( r - \pi (m-3) / 4 \big) + O
\left( r^{(m-3)/2} \right) . \label{asym}
\end{equation*}
This is obtained by using the asymptotics of $J_{m/2} (r)$ (see \cite{Wa}, p.~195)
in \eqref{Dm}. The principal term oscillates unless $u (x) = 0$; its amplitude
increases with $r$, whereas the remainder is bounded for $m = 3$ and decays for $m =
2$.

Let us formulate the assertion analogous to Theorem~1, guaranteeing that a function
satisfying a smoothnesss assumption slightly weaker than that in Theorem~1 and the
mean flux property is metaharmonic. It should be mentioned that functions are
assumed to be real-valued in what follows.

\begin{theorem}
Let $D$ be a bounded domain in $\RR^m$, $m \geq 2$, and let a real-valued $u$ belong
to the Sobolev space $W^{k,p}_{loc} (D)$, where $p \in [1, \infty)$ and an integer
$k \geq 2$ are such that $1 < k - m/p$. If the following equality
\begin{equation}
F (u, x, r) = - \lambda \Gamma \left( \frac{m}{2} \right) \frac{J_{m/2} (\lambda
r)}{(\lambda r / 2)^{(m-2)/2}} \, u (x) \label{Ft}
\end{equation}
with some $\lambda > 0$ holds for every $x \in D$ and all $r \in (0, r (x))$, where
$r (x) > 0$ is such that the ball $B_{r (x)} (x)$ is admissible, then $u$ is
metaharmonic in $D$.
\end{theorem}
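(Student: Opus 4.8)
The plan is to mirror the classical Bôcher--Koebe argument by reducing the mean-flux hypothesis first to the spherical mean value property \eqref{G}, then to the pointwise equation $\nabla^2 u + \lambda^2 u = 0$. First I would invoke the Sobolev embedding theorem: since $1 < k - m/p$ with $k \geq 2$, the function $u$ belongs to $C^{1,\alpha}_{loc}(D)$ for some $\alpha \in (0,1)$, so that the normal derivative $\partial u / \partial n_y$ on spheres is continuous and the mean flux $F(u,x,r)$ is classically defined; at the same time $u \in W^{2,p}_{loc}(D)$ guarantees that the weak Laplacian $\nabla^2 u$ is a well-defined element of $L^p_{loc}(D)$.

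The key elementary identity is the differentiation formula for the spherical mean, $\partial_r M^\circ(u,x,r) = F(u,x,r)$, valid for $u \in C^1$ (it follows by passing to the unit sphere and differentiating under the integral sign). Substituting the hypothesis \eqref{Ft} into the right-hand side and integrating in $r$ from $0$ to $r$, using $M^\circ(u,x,0^+) = u(x)$ together with the Bessel identity \eqref{diff} with $\nu = (m-2)/2$, I expect the primitive to collapse exactly into $a^\circ(\lambda r)$, yielding the spherical mean value property $M^\circ(u,x,r) = a^\circ(\lambda r)\,u(x)$, that is \eqref{G}, for every $x$ and all admissible $r$. Integrating this spherical identity over the ball---the same computation based on \eqref{PBM} with the upper sign that produced \eqref{Rh}---then gives the ball mean value property $\int_{B_r(x)} u\,\D y = (2\pi r / \lambda)^{m/2} J_{m/2}(\lambda r)\,u(x)$ for the present $u$, now obtained without assuming metaharmonicity.

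To close the argument I would convert the flux back into a volume integral of the Laplacian. Because $u \in C^{1,\alpha}_{loc} \cap W^{2,p}_{loc}$, Green's first identity---the divergence theorem applied to $\nabla u \in W^{1,p}$, whose trace on $\partial B_r$ is the classical restriction---gives $\int_{\partial B_r(x)} \partial u / \partial n_y \, \D S_y = \int_{B_r(x)} \nabla^2 u \, \D y$ for every admissible $r$. Comparing this with \eqref{Dm}, and then with the ball mean value property just established, I obtain $\int_{B_r(x)} (\nabla^2 u + \lambda^2 u)\,\D y = 0$ for all admissible balls. Since $\nabla^2 u + \lambda^2 u \in L^1_{loc}(D)$, the Lebesgue differentiation theorem forces $\nabla^2 u + \lambda^2 u = 0$ almost everywhere in $D$.

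Finally, $u$ is then a strong ($W^{2,p}_{loc}$) solution of the Helmholtz equation with right-hand side $-\lambda^2 u \in C^{1,\alpha}_{loc}$; elliptic regularity for the operator $\nabla^2 + \lambda^2$ bootstraps $u$ to $C^\infty$, indeed to a real-analytic function, so that $u$ is genuinely metaharmonic in $D$. The step I anticipate as the main obstacle is precisely the low regularity: one must justify both the mean-value differentiation formula and Green's identity for $u$ merely in $W^{2,p}_{loc} \cap C^{1,\alpha}_{loc}$ rather than in $C^2$, and then correctly upgrade the almost-everywhere solution to a classical one by elliptic regularity. By contrast, the Bessel-function bookkeeping verifying that the integrated coefficient reproduces $a^\circ$ and that the ball integral reproduces the coefficient in \eqref{Rh} is routine, given \eqref{diff} and \eqref{PBM}.
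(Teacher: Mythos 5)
Your proposal is correct, and its first stage coincides with the paper's: the paper likewise uses the Sobolev embedding to get $u \in C^1(D)$, rewrites \eqref{Ft} as the spherical flux identity over the unit sphere, and integrates in the radial variable to recover the spherical mean value property \eqref{G} --- your use of \eqref{diff} with $\nu = (m-2)/2$ to produce the antiderivative $a^\circ(\lambda r)\,u(x)$ is the same computation the paper performs via \eqref{PBM} with the lower sign and $\nu = m/2$. The routes part ways after \eqref{G}. The paper at that point simply invokes its Theorem~4, the converse mean value theorem for spheres, which assumes only $u \in C^0(D)$: its proof first establishes $u \in C^\infty(D)$ by Mikhlin's mollifier argument (multiplying \eqref{G} by a mollifier and integrating in $r$ to represent $u$ as a convolution with a smooth kernel, using $a^\circ(\lambda r) > 0$ for $\lambda r < j_{(m-2)/2,1}$), and then derives $\int_{B_r(x)} [\nabla^2 u + \lambda^2 u]\,\D y = 0$ classically and concludes pointwise by continuity of $\nabla^2 u + \lambda^2 u$. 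You instead finish directly from the assumed Sobolev regularity: you pass from \eqref{G} to the ball property \eqref{Rh}, apply Green's identity in the $W^{2,p}_{loc}$ sense to convert the flux into $\int_{B_r(x)} \nabla^2 u\,\D y$, deduce $\int_{B_r(x)} [\nabla^2 u + \lambda^2 u]\,\D y = 0$ (which is the paper's relation \eqref{DD} in disguise), kill the integrand almost everywhere by Lebesgue differentiation, and upgrade by elliptic regularity (hypoellipticity of $\nabla^2 + \lambda^2$ suffices, so your reliance on $C^{1,\alpha}$ rather than the bare $C^1$ the embedding guarantees in borderline cases is harmless). Each approach buys something: the paper's detour through Theorem~4 yields a reusable standalone result under the much weaker hypothesis $u \in C^0(D)$, at the cost of the mollification machinery; your argument is shorter and self-contained given the $W^{k,p}_{loc}$ hypothesis, but it consumes that hypothesis essentially (weak Laplacian, Sobolev traces in Green's identity, elliptic bootstrap) and would not by itself deliver the $C^0$ converse mean value theorem that the paper also wants for its Theorems~5 and~7.
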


\begin{proof}
It is well-known (see, for example, \cite{GT}, Chapter 7) that $u \in W^{k,p}_{loc}
(D)$ is a $C^1$-function on $D$ provided $p$ and $k$ satisfy the described
conditions. Hence \eqref{Ft} is well-defined for all $x \in D$ and all $r \in (0, r
(x))$ with $r (x)$ depending on the distance from $x$ to $\partial D$. Since this
equality is equivalent to \eqref{Dm}, we write it as follows:
\begin{equation*}
\int_{|\theta|=1} \frac{\partial u}{\partial \rho} (x + \rho \theta) \, \D
S^{m-1}_\theta = - \lambda \left( 2 \pi \right)^{m/2} (\lambda \rho)^{1 - m/2}
J_{m/2} (\lambda \rho) \, u (x) \, , \ \rho > 0 . 
\end{equation*}
Here $\D S^{m-1}_\theta$ is the area element of the unit sphere $S^{m-1} \subset
\RR^m$ at $\theta$. Integrating the last relation from $0$ to $r$ with respect to
$\rho$ and using formula \eqref{PBM} with the lower sign and $\nu = m/2$, we obtain
\eqref{G}. Thus, our assertion is a consequence of the following one.
\end{proof}

\begin{theorem}
Let $D$ be a bounded domain in $\RR^m$, $m \geq 2$, and let $u \in C^0 (D)$ be
real-valued. If equality \eqref{G} with some $\lambda > 0$ holds for every $x \in D$
and all $r \in (0, r (x))$, where $r (x) > 0$ is such that the ball $B_{r (x)} (x)$
is admissible, then $u$ is metaharmonic in~$D$.
\end{theorem}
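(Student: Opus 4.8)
The plan is to first upgrade the regularity of $u$ from mere continuity to $C^\infty$ by exploiting the mean value property through mollification, and then to extract the Helmholtz equation from the small-radius asymptotics of \eqref{G}.

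First I would fix a radial mollifier: let $\phi \in C^\infty$ be nonnegative, supported in the unit ball, radial (so $\phi(y) = \psi(|y|)$) and normalised by $\int \phi = 1$, and put $\phi_\epsilon(y) = \epsilon^{-m}\phi(y/\epsilon)$. For any $x$ with $\overline{B_\epsilon(x)} \subset D$ and $\epsilon < r(x)$, passing to polar coordinates and invoking \eqref{G} gives
\[
(u * \phi_\epsilon)(x) = \int_0^\epsilon \epsilon^{-m}\psi(r/\epsilon)\,|\partial B_r|\,M^\circ(u,x,r)\,\D r = C_\epsilon\, u(x) ,
\]
where
\[
C_\epsilon = \int_0^\epsilon \epsilon^{-m}\psi(r/\epsilon)\,|\partial B_r|\,a^\circ(\lambda r)\,\D r
\]
is independent of $x$; the radiality of $\phi$ is what turns the convolution into $M^\circ$. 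Since $a^\circ(\lambda r) \to 1$ as $r \to 0$ while $\int_0^\epsilon \epsilon^{-m}\psi(r/\epsilon)|\partial B_r|\,\D r = \int \phi_\epsilon = 1$, we get $C_\epsilon \to 1$; moreover $C_\epsilon > 0$ as soon as $\lambda\epsilon < j_{(m-2)/2,1}$, because then $a^\circ(\lambda r) > 0$ on $(0,\epsilon)$ by the monotonicity recorded after \eqref{diff}. Hence on any subdomain compactly contained in $D$ one may choose $\epsilon$ so small that $u = C_\epsilon^{-1}(u*\phi_\epsilon)$ holds throughout it; the right-hand side is $C^\infty$, so $u \in C^\infty(D)$.

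Once $u$ is smooth I would simply compare the two sides of \eqref{G} as $r \to 0$. The standard expansion of the spherical mean of a $C^2$ function reads
\[
M^\circ(u,x,r) = u(x) + \frac{r^2}{2m}\,\nabla^2 u(x) + o(r^2) ,
\]
while the power series of the Bessel function, together with $\Gamma(m/2)/\Gamma(m/2+1) = 2/m$, gives $a^\circ(\lambda r) = 1 - (\lambda r)^2/(2m) + O(r^4)$. Substituting both into \eqref{G}, cancelling $u(x)$, dividing by $r^2/(2m)$ and letting $r \to 0$ yields $\nabla^2 u(x) + \lambda^2 u(x) = 0$. Since $x \in D$ is arbitrary, $u$ is metaharmonic in $D$.

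The step demanding the most care is the regularity bootstrap. There one must justify the interchange of integration defining the convolution (legitimate by Fubini, as $u$ is continuous and $\phi_\epsilon$ integrable and compactly supported), verify that $C_\epsilon \neq 0$ for the chosen $\epsilon$, and ensure that the radius restriction $r < r(x)$ still leaves room for a mollification radius that is uniform on each compactly contained subdomain; the latter holds because $r(x)$ may be taken equal to $\mathrm{dist}(x,\partial D)$, which is bounded below away from $\partial D$. After the smoothness is secured, the passage to the differential equation is a routine matching of Taylor coefficients.
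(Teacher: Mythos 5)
Your proof is correct. The regularity half is essentially the paper's own argument: the paper also multiplies \eqref{G} by a radial mollifier $\omega_\epsilon(r)$, integrates over $(0,\epsilon)$, and uses the positivity of $a^\circ(\lambda r)$ for $\lambda\epsilon < j_{(m-2)/2,1}$ to write $u$ as $c(\lambda,\epsilon)^{-1}$ times its own mollification, concluding $u \in C^\infty(D)$ — your $C_\epsilon$ is exactly its $c(\lambda,\epsilon)$, and you handle the uniformity of $\epsilon$ on compact subsets just as the paper does with its layer of balls. Where you diverge is in extracting the equation: the paper integrates \eqref{G} up to the ball mean \eqref{Rh}, applies Green's first identity to the volume integral and the derivative formula \eqref{diff} to $a^\circ$, arriving at $\int_{B_r(x)} \bigl[ \nabla^2 u + \lambda^2 u \bigr](y)\, \D y = 0$ for all admissible $r$, and then concludes pointwise by shrinking balls and continuity; you instead match second-order Taylor coefficients as $r \to 0$, comparing the Pizzetti-type expansion $M^\circ(u,x,r) = u(x) + \frac{r^2}{2m}\nabla^2 u(x) + o(r^2)$ with the Bessel series $a^\circ(\lambda r) = 1 - (\lambda r)^2/(2m) + O(r^4)$ (both expansions are right, including the constant $2/m = \Gamma(m/2)/\Gamma(m/2+1)$). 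Your route is more elementary and purely local, needing only the leading terms of two standard expansions; the paper's route is global in $r$, yielding the integral identity at every admissible radius as a by-product (which it then reuses verbatim in the proof of Theorem~6), at the cost of invoking Green's formula and one more Bessel identity. Both are complete once smoothness is secured, and your bootstrap secures it in the same way the paper does.
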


By analogy with the case of harmonic functions, this assertion for metaharmonic
functions should be referred to as the converse mean value theorem for spheres. The
author would be surprised if the following its proof had not been published earlier,
but he failed to find it in the literature.

\begin{proof}
First, it is necessary to show that $u$ is smooth and for this purpose we use the
method applied by Mikhlin in his proof of the corresponding theorem for harmonic
functions (see \cite{M}, Chapter~11, \S~7).

Let $D'$ be a subdomain of $D$ whose closure $\overline{D'} \subset D$ is separated
from $\partial D$ by a layer (strip) formed by parts of balls (discs) located within
$D$; each of these balls (discs) has its centre on $\partial D$ and radius $2
\epsilon$ with $\lambda \epsilon < j_{(m-2)/2, 1}$. By $\omega_\epsilon (|y - x|) =
\omega_\epsilon (r)$ we denote the mollifier considered in \cite{M}, Chapter~1,
\S~1. Let $x \in D'$, then, multiplying \eqref{G} by $\omega_\epsilon (r)$, we
obtain
\[ u (x) \, a^\circ (\lambda r) \, |\partial B_r| \, \omega_\epsilon (r) = 
\omega_\epsilon (r) \int_{\partial B_r (x)} \!\! u (y) \, \D S_y \, .
\]
Integration with respect to $r$ over $(0, \epsilon)$ yields
\[ u (x) \, c (\lambda, \epsilon) = \int_{B_\epsilon (x)} \!\! u (y) \, 
\omega_\epsilon (|y - x|) \, \D y = \int_{D} \!\! u (y) \, \omega_\epsilon (|y - x|)
\, \D y \, . 
\]
Here the last equality follows from the fact that $x \in D'$, whereas
$\omega_\epsilon (|y - x|)$ vanishes outside $B_\epsilon (x)$. Moreover,
\[ c (\lambda, \epsilon) = \int_0^\epsilon  a^\circ (\lambda r) \, |\partial
B_r| \, \omega_\epsilon (r) \, \D r > 0 \, ,
\]
because $a^\circ (\lambda r) > 0$ for $r \in (0, \epsilon)$ in view that $\lambda
\epsilon < j_{(m-2)/2, 1}$. Since $\omega_\epsilon$ is infinitely differentiable,
the obtained representation shows that $u \in C^\infty (D')$. By taking $\epsilon$
arbitrarily small, we see that $u \in C^\infty (D)$.

Now we are in a position to show that $u$ is metaharmonic in $D$. Let $x \in D$ and
let $r (x) > 0$ be such that $B_{r} (x)$ is admissible. Since \eqref{G} holds for
all $r \in (0, r (x))$, for any such $r$ \eqref{Rh} holds as well (it follows from
\eqref{G} by integration). Applying the Laplacian to the integral on the right-hand
side of \eqref{Rh}, we obtain
\[ \int_{|y| < r} \!\! \nabla^2_x \, u (x+y) \, \D y = \int_{|y| = r} \!\! \nabla_x \,
u (x+y) \cdot \frac{y}{r} \, \D S_y \, .
\]
Here the equality is a consequence of Green's first formula. By changing variables
this can be written as follows:
\[ r^{m-1} \frac{\partial}{\partial r} \int_{|\theta|=1} \!\! u (x + r \theta) \, 
\D S^{m-1}_\theta = |S^{m-1}| r^{m-1} \frac{\partial}{\partial r} M^\circ (u, x, r)
\, ,
\]
where $M^\circ (u, x, r) = a^\circ (\lambda r) \, u (x)$ and $a^\circ$ is defined by
\eqref{a}. In view of \eqref{diff}, we have that
\[ \frac{\partial}{\partial r} M^\circ (u, x, r) = - \frac{\lambda J_{m/2} (\lambda r)}
{(\lambda r / 2)^{(m-2)/2}} \, u (x) \, .
\]
Combining the above considerations and \eqref{Rh}, we conclude that
\[ \int_{|y| < r} \!\! [ \nabla^2_x \, u + \lambda^2 u ] \, (x+y) \, \D y = 0
\]
for every $x \in D$ and all $r \in (0, r (x))$. Thus, in each $B_r (x)$ there
exists $y (r, x)$ such that $[ \nabla^2 \, u + \lambda^2 u ] \, (y (r, x)) = 0$.
Since $y (r, x) \to x$ as $r \to 0$, we obtain by continuity that $u$ satisfies the
Helmholtz equation at every $x \in D$, thus being metaharmonic in $D$.
\end{proof}

For metaharmonic functions, the following converse of the mean value property for
balls is an immediate consequence of Theorem~4.

\begin{corollary}
Let $D$ be a bounded domain in $\RR^m$, $m \geq 2$, and let $u \in C^0 (D)$ be
real-valued. If equality \eqref{Rh} with some $\lambda > 0$ holds for every $x \in
D$ and all $r \in (0, r (x))$, where $r (x) > 0$ is such that the ball $B_{r (x)}
(x)$ is admissible, then $u$ is metaharmonic in $D$.
\end{corollary}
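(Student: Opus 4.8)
The plan is to deduce the Corollary from Theorem~4 by recovering the spherical mean value identity \eqref{G} from the ball identity \eqref{Rh}. Since \eqref{Rh} was itself obtained from \eqref{G} by integrating over $(0,r)$ (using \eqref{PBM} with the upper sign and $\nu=(m-2)/2$), the natural move is to reverse that step by differentiating \eqref{Rh} in $r$.

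First I would rewrite the right-hand side of \eqref{Rh} in polar form,
\[
\int_{|y|<r} u(x+y)\,\D y = \int_0^r \left( \int_{\partial B_\rho (x)} u(y)\,\D S_y \right) \D \rho \, .
\]
Because $u \in C^0(D)$, the inner spherical integral depends continuously on $\rho$ on $(0,r(x))$, so the ball integral is a $C^1$ function of $r$ there and, by the fundamental theorem of calculus, its derivative in $r$ equals $\int_{\partial B_r(x)} u(y)\,\D S_y$. This is the only use of the regularity of $u$, and it is exactly what the hypothesis supplies.

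Next I would differentiate the left-hand side of \eqref{Rh}. The relevant tool is the companion of \eqref{diff}, namely $[z^\nu J_\nu(z)]' = z^\nu J_{\nu-1}(z)$ (see \cite{Wa}, p.~66), which is also what one obtains by differentiating \eqref{PBM} with the upper sign. A short computation with $\nu=m/2$ gives
\[
\frac{\D}{\D r} \left[ \left( \frac{2\pi r}{\lambda} \right)^{m/2} J_{m/2}(\lambda r) \right] = (2\pi)^{m/2} \lambda^{1-m/2} r^{m/2} J_{(m-2)/2}(\lambda r) \, .
\]
Comparing this with $a^\circ(\lambda r)\,|\partial B_r|$, where $a^\circ$ is given by \eqref{a} and $|\partial B_r| = 2\pi^{m/2} r^{m-1}/\Gamma(m/2)$, and using $1-m/2 = -(m-2)/2$, one checks that the two expressions coincide. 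Hence differentiating \eqref{Rh} yields precisely \eqref{G} for every $x\in D$ and all $r\in(0,r(x))$, and Theorem~4 then gives that $u$ is metaharmonic in $D$.

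I expect no serious obstacle here, since the argument is essentially the inverse of the integration already carried out to pass from \eqref{G} to \eqref{Rh}. The only two points needing care are the legitimacy of differentiating the ball integral when $u$ is merely continuous---settled by the fundamental theorem of calculus above---and the verification that the differentiated Bessel coefficient reproduces exactly the coefficient appearing in \eqref{G}, which is the routine identity inverse to the one encoded in \eqref{PBM}.
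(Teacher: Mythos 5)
Your proposal is correct and follows essentially the same route as the paper: differentiating \eqref{Rh} with respect to $r$ via the identity $[z^{\nu} J_\nu (z)]' = z^{\nu} J_{\nu-1} (z)$ with $\nu = m/2$, recovering \eqref{G}, and invoking Theorem~4. Your explicit justification of the differentiation of the ball integral by the fundamental theorem of calculus is a welcome point of care that the paper leaves implicit, but it is the same argument.
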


\begin{proof}
The assumptions made allow us to differentiate the volume mean value equality
\eqref{Rh} with respect to~$r$, thus obtaining
\begin{equation}
\frac{u (x)}{b (\lambda, r)} = \int_{\partial B_r (x)} \!\! u (y) \, \D S_y \quad
\mbox{for all} \ r \in (0, r (x)).
\label{Sm}
\end{equation}
Indeed, it is sufficient to apply the formula $[z^{\nu} J_\nu (z)]' = z^{\nu}
J_{\nu-1} (z)$ (see \cite{Wa}, p.~66) with $\nu = m/2$ to the left-hand side of
\eqref{Rh}. Since \eqref{Sm} is equivalent to \eqref{G}, the assertion follows from
Theorem~4.
\end{proof}

\section{Other characterizations of metaharmonicity}

It is well-known that the mean values for spheres and balls are equal for harmonic
functions. On the other hand, if these mean values are equal for an arbitrary
continuous function and all admissible balls, then this function is harmonic; see
\cite{BR}, where the two-dimensional case is considered, and \cite{Ku}, Theorem~1.8,
for the general case.

The relation between the mean values for spheres and balls is more complicated for
metaharmonic functions, namely:
\begin{equation}
m J_{m/2} (\lambda r) M^\circ (u, x, r) = \lambda r J_{(m-2)/2} (\lambda r)
M^\bullet (u, x, r) \, . \label{MM}
\end{equation}
This immediately follows from \eqref{G}, \eqref{a}, \eqref{G'} and \eqref{a'}. Since
the ratio of coefficients at $M^\circ (u, x, r)$ and $M^\bullet (u, x, r)$ in this
equality tends to $1$ as $\lambda \to 0$, the above mentioned property of harmonic
functions follows from \eqref{MM}. Let us prove the assertion converse to this
equality.

\begin{theorem}
Let $D \subset \RR^m$, $m \geq 2$, be a bounded domain and let $u \in C^0 (D)$ be
real-valued. If equality \eqref{MM} holds for all $x \in D$ and all $r \in (0, r
(x))$, where $r (x) > 0$ is such that the ball $B_{r (x)} (x)$ is admissible, then
$u$ is metaharmonic in~$D$.
\end{theorem}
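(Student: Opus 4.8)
The plan is to reduce Theorem 5 to the already-established converse mean value theorem for spheres (Theorem 4). Equality \eqref{MM} relates the spherical mean $M^\circ$ and the ball mean $M^\bullet$, but by itself it does not immediately give either of them in terms of $u(x)$; it only constrains their ratio. So the strategy is to extract from \eqref{MM} the pointwise evaluation \eqref{G}, namely $a^\circ(\lambda r)\,u(x)=M^\circ(u,x,r)$, and then invoke Theorem 4 verbatim.

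First I would recall the two defining identities. By definition $M^\bullet(u,x,r)=\frac{1}{|B_r|}\int_{B_r(x)}u\,\D y$ and $M^\circ(u,x,r)=\frac{1}{|\partial B_r|}\int_{\partial B_r(x)}u\,\D S_y$, and these are linked by the elementary integration-in-radius relation $|B_r|\,M^\bullet(u,x,r)=\int_0^r |\partial B_\rho|\,M^\circ(u,x,\rho)\,\D\rho$, which holds for any continuous $u$ and requires no metaharmonicity. Writing $|\partial B_\rho|=|S^{m-1}|\rho^{m-1}$, this reads $\frac{m}{r^m}\int_0^r \rho^{m-1}M^\circ(u,x,\rho)\,\D\rho = M^\bullet(u,x,r)$.

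The key step is then to treat \eqref{MM} as an integral equation for the unknown function $\rho\mapsto M^\circ(u,x,\rho)$ (with $x$ fixed). Substituting the integrated relation above into \eqref{MM} turns it into
\[
m\,J_{m/2}(\lambda r)\,M^\circ(u,x,r)
=\frac{m\,\lambda}{r^{m-1}}\,J_{(m-2)/2}(\lambda r)\int_0^r \rho^{m-1}M^\circ(u,x,\rho)\,\D\rho .
\]
Setting $g(r)=r^{m-1}M^\circ(u,x,r)$, this is a first-order linear relation between $g(r)$ and $\int_0^r g$; differentiating in $r$ (legitimate once one notes, exactly as in Theorem 4's proof via the mollifier argument, that \eqref{MM} forces $u\in C^\infty$, hence $M^\circ$ is smooth in $r$) yields a first-order linear ODE for $g$. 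Using the Bessel recurrences \eqref{diff} and $[z^{\nu}J_\nu(z)]'=z^{\nu}J_{\nu-1}(z)$ to simplify the coefficients, this ODE should integrate to $M^\circ(u,x,r)=C(x)\,a^\circ(\lambda r)$ for some constant $C(x)$; evaluating at $r\to 0$, where $a^\circ(\lambda r)\to 1$ and $M^\circ(u,x,r)\to u(x)$ by continuity, forces $C(x)=u(x)$ and delivers exactly \eqref{G}. Theorem 4 then concludes that $u$ is metaharmonic.

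The main obstacle I anticipate is the smoothness issue needed to differentiate, and the careful handling of zeros of the Bessel functions appearing as coefficients in \eqref{MM}: the relation only pins down the ratio of the two means away from common zeros of $J_{m/2}$ and $J_{(m-2)/2}$, so one must first secure $u\in C^\infty$ (reusing the mollifier argument from the proof of Theorem 4, which only requires \eqref{MM} to hold for small $r$ with $\lambda\epsilon<j_{(m-2)/2,1}$, keeping $a^\circ>0$ and the coefficients nonvanishing on a small interval) before the ODE derivation is valid. Once local smoothness and the identity \eqref{G} are established on a small ball, a continuation argument over $r$ extends \eqref{G} to all admissible radii, and the appeal to Theorem 4 is then immediate.
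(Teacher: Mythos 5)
Your core mechanism is the same as the paper's: relation \eqref{MM} is turned into a first-order linear ODE in $r$ for a radial average of $u$, solved via the Bessel recurrences, with the constant of integration fixed by letting $r \to 0$ and using continuity of $u$, after which the resulting pointwise mean value identity is handed to the already-proved converse theorem. Indeed, your $G(r) = \int_0^r \rho^{m-1} M^\circ (u,x,\rho) \, \D \rho$ equals $r^m M^\bullet (u,x,r)/m$, and the paper works directly with $M^\bullet$: its identity $\partial_r M^\bullet = \frac{m}{r} \big[ M^\circ - M^\bullet \big]$ combined with \eqref{MM} gives $\partial_r M^\bullet / M^\bullet = G'/G - m/r = \lambda J_{(m-2)/2}(\lambda r)/J_{m/2}(\lambda r) - m/r$, which is exactly your ODE in logarithmic form; the paper then integrates it to reach \eqref{G'} (equivalently \eqref{Rh}) and concludes via Corollary~1, while you reach \eqref{G} and invoke Theorem~4 --- the same endpoint, since Corollary~1 reduces to Theorem~4.

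There is, however, one step that as you justify it is circular and would fail: you propose to ``secure $u \in C^\infty$ by reusing the mollifier argument from the proof of Theorem~4.'' That argument starts from the pointwise identity \eqref{G}, $u(x)\, a^\circ (\lambda r) = M^\circ (u,x,r)$, multiplying it by $\omega_\epsilon (r)$ and integrating in $r$ to represent $u(x)$ as a mollification of $u$. But \eqref{MM} contains no $u(x)$ at all --- it only couples the two averages $M^\circ$ and $M^\bullet$ --- so the mollifier argument cannot be launched from \eqref{MM}; the identity \eqref{G} it needs is precisely what you are trying to derive. Fortunately, no smoothness is needed, because you should not differentiate the relation between $g$ and $\int_0^r g$ (that would indeed require $M^\circ$ differentiable in $r$): the relation \emph{already is} the ODE $J_{m/2}(\lambda r)\, G'(r) = \lambda J_{(m-2)/2}(\lambda r)\, G(r)$ for $G$, which is $C^1$ by the fundamental theorem of calculus since $g(\rho) = \rho^{m-1} M^\circ (u,x,\rho)$ is continuous whenever $u \in C^0 (D)$ (this is also why the paper can differentiate $M^\bullet$ with only $u \in C^0$). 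On $(0,\delta)$ with $\lambda \delta < j_{m/2,1}$ the function $r^{m/2} J_{m/2}(\lambda r)$ is a nonvanishing solution, so uniqueness for linear ODEs gives $G(r) = C(x)\, r^{m/2} J_{m/2}(\lambda r)$ --- note that this linear form also disposes of the vanishing of $M^\bullet$ or $u(x)$, which the paper instead handles by a continuity remark at the end --- and matching $G(r) \sim u(x)\, r^m / m$ as $r \to 0$ yields \eqref{G} for small $r$. That suffices for Theorem~4, whose hypothesis (and proof) only involves small radii, so your final continuation over all admissible $r$ is unnecessary. With the mollifier detour deleted and the ODE read off directly from the integral form, your argument is correct and essentially coincides with the paper's.
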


\begin{proof}
Let $\rho > 0$ be sufficiently small. If $r \in (0, \rho)$, then $M^\bullet (u,x,r)$
is defined for $x$ belonging to an open subset of $D$ depending on the smallness of
$\rho$. Moreover, $M^\bullet (u,x,r)$ is differentiable with respect to $r$ and
\[ \partial_r M^\bullet (u,x,r) = m r^{-1} [ M^\circ (u,x,r) - M^\bullet (u,x,r) ] 
\quad \mbox{for} \ r \in (0, \rho) .
\]
In view of \eqref{MM}, this takes the form:
\[ \frac{\partial_r M^\bullet}{M^\bullet} = \lambda \frac{J_{(m-2)/2} (\lambda r)}
{J_{m/2} (\lambda r)} - \frac{m}{r} = \lambda \frac{J_{m/2}' (\lambda r)} {J_{m/2}
(\lambda r)} - \frac{m}{2 r} \, ,
\]
where the last equality is a consequence of the recurrence formula (\cite{Wa},
p.~45):
\[ J_{\nu-1} (z) = J_{\nu}' (z) + \frac{\nu}{z} J_{\nu} (z) \, .
\]

Since logarithmic derivatives stand on both sides of the equation for $M^\bullet$,
integrating it with respect to $r$ over the interval $(\epsilon, \rho)$ and letting
$\epsilon \to 0$, we arrive at \eqref{G'} with $\rho$ instead of $r$. In particular,
shrinking $B_\epsilon (x)$ to its centre as $\epsilon \to 0$, one obtains that
$M^\bullet (u,x,\epsilon) \to u (x)$ because $u \in C^0 (D)$, and this takes place
for all $x$ belonging to an arbitrary closed subset of $D$. By letting $\epsilon \to
0$ on the right-hand side, one obtains the factor $\Gamma \left( \frac{m}{2} + 1
\right)$ due to the leading term of the power expansion of $J_{m/2}$. Hence for
every $x \in D$ we have that $M^\bullet (u,x,\rho) = a^\bullet (\lambda \rho) \, u
(x)$ for all admissible $\rho$. Now, the assertion that $u$ is metaharmonic in $D$
follows from Corollary~1 because this equality is equivalent to~\eqref{Rh}.

It is worth mentioning that the above calculations are valid under the assumption
that $M^\bullet (u,x,r)$ and $u (x)$ do not vanish as well as $J_{m/2} (\lambda r)$.
Then the general result follows by continuity.
\end{proof}

Along with \eqref{FM} and \eqref{MM}, there is one more linear relation between
averages over spheres and balls, namely:
\begin{equation}
m F (u, x, r) = - \lambda^2 r M^\bullet (u, x, r) \, ; \label{FM'}
\end{equation}
notice that it is equivalent to \eqref{DD}. Like relation \eqref{MM} in Theorem~5,
it guarantees metaharmonicity of $u$.

\begin{theorem}
Let $D \subset \RR^m$, $m \geq 2$, be a bounded domain, and let a real-valued $u
\in C^1 (D) \cap C^0 (\overline D)$ be such that $F (u, \cdot, r)$ and $M^\circ (u,
\cdot, r)$ are $C^2$-functions on the corresponding subdomain provided $r$ is
sufficiently small. If \eqref{FM'} holds for all $x \in D$ and all $r \in (0, r
(x)]$, where $r (x) > 0$ is such that $B_{r (x)} (x)$ is admissible, then $u$ is
metaharmonic in $D$.
\end{theorem}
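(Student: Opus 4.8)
The plan is to reduce Theorem 6 to one of the already-proved converse theorems by converting the relation \eqref{FM'} into a differential equation for the mean value functions and integrating it. The relation \eqref{FM'} is equivalent to the integrated Helmholtz identity \eqref{DD}, so the strategy is to recover from it either the spherical mean identity \eqref{G} (and then invoke Theorem~4) or the volume mean identity \eqref{Rh} (and then invoke Corollary~1). The smoothness hypotheses on $F(u,\cdot,r)$ and $M^\circ(u,\cdot,r)$ are precisely what is needed to make the following manipulations rigorous.

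\medskip

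\noindent\textbf{Key steps.} First I would express the left-hand side of \eqref{FM'} using the definition of the mean flux $F$ from \eqref{F} and Green's first identity, writing
\[
F(u,x,r) = \frac{1}{|\partial B_r|} \int_{\partial B_r (x)} \frac{\partial u}{\partial n_y} \, \D S_y = \partial_r M^\circ (u, x, r) \, ,
\]
since the mean flux over a sphere is the radial derivative of the spherical mean (this is the elementary identity $\partial_r M^\circ = \frac{1}{|\partial B_r|}\int_{\partial B_r}\partial u/\partial n$ for a $C^1$ function). Likewise I would use the standard relation $\partial_r M^\bullet (u,x,r) = m r^{-1}[M^\circ - M^\bullet]$ already employed in the proof of Theorem~5. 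Substituting $F = \partial_r M^\circ$ into \eqref{FM'} gives a first-order linear ODE in $r$ relating $\partial_r M^\circ$ to $M^\bullet$. Combining this with the expression for $\partial_r M^\bullet$ yields a closed system in the two unknowns $M^\circ(u,x,r)$ and $M^\bullet(u,x,r)$, whose coefficients are elementary in $r$.

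\medskip

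\noindent\textbf{Solving the system and concluding.} The resulting system is essentially the Bessel-type recursion satisfied by \eqref{a} and \eqref{a'}: eliminating one unknown produces a second-order ODE for $M^\circ(u,x,r)$ (as a function of $r$ for fixed $x$) whose regular solution at $r=0$ is forced, by the boundary behaviour $M^\circ (u,x,r) \to u(x)$ and $M^\bullet (u,x,r) \to u(x)$ as $r\to 0^+$, to equal $a^\circ (\lambda r)\, u(x)$. This is exactly \eqref{G}, whence Theorem~4 delivers metaharmonicity. Alternatively, integrating the ODE for $M^\bullet$ from $0$ to $r$ recovers \eqref{G'}, equivalent to \eqref{Rh}, and Corollary~1 applies directly.

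\medskip

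\noindent\textbf{The main obstacle} will be justifying that the mean value functions are differentiable in $r$ down to the right endpoint and extracting the correct normalisation at $r=0$. The hypothesis that $F(u,\cdot,r)$ and $M^\circ(u,\cdot,r)$ are $C^2$ in $x$ for small $r$ is used to ensure that differentiation under the integral sign and the exchange of $\partial_r$ with the spherical average are legitimate, so that the ODE holds pointwise. As in Theorem~5, the integration is first carried out under the assumption that $M^\bullet(u,x,r)$, $u(x)$ and $J_{m/2}(\lambda r)$ do not vanish, the logarithmic-derivative form of the equation being integrated over $(\epsilon,\rho)$ with $\epsilon\to 0$; the leading term of the power series of $J_{m/2}$ supplies the factor $\Gamma(m/2+1)$ in $a^\bullet$, and the general case follows by continuity.
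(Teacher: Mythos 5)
Your proposal is correct, but it takes a genuinely different route from the paper. The paper's proof never forms an ODE in $r$: it integrates the flux relation \eqref{FM'} in $\rho$, swaps the order of integration, and arrives at the representation \eqref{cor}, $u = m M^\circ + \frac{mr}{m-2}F + \frac{m\lambda^2}{(m-2)|S^{m-1}|}U$, using the hypotheses that $F(u,\cdot,r)$ and $M^\circ(u,\cdot,r)$ are $C^2$ in $x$ together with the potential-theoretic fact that $U$ maps $C^1(D)\cap C^0(\overline D)$ into $C^2(D)$ to conclude $u \in C^2(D)$; only then does it apply Green's formula \eqref{Green} and \eqref{DD} to get $\int_{|y|<r}[\nabla^2 u + \lambda^2 u](x+y)\,\D y = 0$ and finish by the shrinking-ball argument of Theorem~4. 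You instead exploit $F = \partial_r M^\circ$ (valid for $u \in C^1$) together with $\partial_r M^\bullet = \frac{m}{r}(M^\circ - M^\bullet)$, eliminate to the Bessel-type equation $M^{\circ\prime\prime} + \frac{m-1}{r}M^{\circ\prime} + \lambda^2 M^\circ = 0$ (or its $M^\bullet$ counterpart), discard the $Y_\nu$-type solution by boundedness as $r \to 0^+$, and recover \eqref{G} or \eqref{G'}, after which Theorem~4 or Corollary~1 applies. Each approach buys something: yours is more elementary and in fact stronger, since the $C^2$-in-$x$ hypotheses on $F$ and $M^\circ$ are never used --- $u \in C^1(D)$ suffices --- and it is uniform in $m \geq 2$, whereas the paper's intermediate formula \eqref{cor} carries the factor $1/(m-2)$ and so degenerates for $m=2$; the paper's route, in exchange, yields the representation \eqref{cor} as a by-product (Corollary~2) and establishes interior $C^2$ regularity of $u$ directly. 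Two presentational corrections: your opening claim that the $C^2$ hypotheses on the averages are ``precisely what is needed'' for your manipulations is inaccurate, since your argument uses only differentiability in $r$, not smoothness in $x$; and the closing appeal to the logarithmic-derivative device of Theorem~5, with its non-vanishing caveats on $M^\bullet$, $u(x)$ and $J_{m/2}(\lambda r)$, is unnecessary here --- the eliminated equation is linear, requires no division, and the constant is fixed by $M^\circ(u,x,r) \to u(x)$ regardless of whether $u(x)$ vanishes, which makes your version cleaner than the continuity patch used in Theorem~5.
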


\begin{proof}
As in the proof of Theorem 4, let us show first that $u$ is a $C^2$-function
locally, for which purpose we transform \eqref{FM'} into an appropriate form.
Changing $r$ to $\rho$, we write it as follows
\begin{equation*}
\int_{|\theta|=1} \frac{\partial u}{\partial \rho} (x + \rho \theta) \, \D
S^{m-1}_\theta = \frac{- \lambda^2}{\rho^{m-1}} \int_{|y| < \rho} \!\! u (x+y) \, \D
y \ \ \mbox{for} \ \rho \in (0, r (x)] 
\end{equation*}
and integrate this with respect to $\rho$ over $(0, r)$ with $r \in (0, r (x)]$ (the
assumptions imposed on $u$ allow us to do this). Then we have
\begin{equation*}
|S^{m-1}| u (x) = \int_{|\theta|=1} \!\! u (x + r \theta) \, \D S^{m-1}_\theta +
\lambda^2 \int_0^r \frac{\D \rho} {\rho^{m-1}} \int_{|y| < \rho} \!\! u
(x+y) \, \D y \, .
\end{equation*}
Changing the order of integration in the last integral and integrating $\rho^{1-m}$
over $(|y|, r)$ in the obtained inner integral, we arrive at
\[ u (x) \frac{|\partial B_r|}{m} = \int_{\partial B_r (x)} \left[ u (y) + \frac{r}
{m-2} \frac{\partial u}{\partial n_y} \right] \D S_y + \frac{\lambda^2 r^{m-1}}
{m-2} \int_{|y| < r} \frac{u (x+y) \, \D y} {|y|^{m-2}} \, .
\]
Here, the last term is a result of integration, whereas the second term in the
square brackets replaces the integrated term containing $\int_{|y| < r} \! u (x+y)
\, \D y$; this integral is just expressed in terms of flux by virtue of relation
\eqref{DD} equivalent to \eqref{FM'}.

Now we write the last equality in the form
\begin{equation}
u (x) = m M^\circ (u, x, r) + \frac{m \, r}{m-2} F (u, x, r) + \frac{m \,
\lambda^2}{(m-2) |S^{m-1}|} \, U (u, x, r) \, , \label{cor}
\end{equation}
which demonstrates that $u$ is a $C^2$-function. Indeed, the first two terms on the
right-hand side have this property by the assumption made about the averages.
Moreover, the integral operator
\begin{equation}
U (u, x, r) = \int_{|y| < r} \frac{u (x+y) \, \D y} {|y|^{m-2}} \label{pot}
\end{equation}
maps $u \in C^1 (D) \cap C^0 (\overline D)$ to $C^2 (D)$. This fact is well-known
for the Newtonian potential (see \cite{V}, p.~292); since $U$ is similar to this
potential, the cited proof is valid for $U$ with minor amendments.

Green's first formula implies that
\begin{equation}
\int_{\partial B_r (x)} \frac{\partial u}{\partial n_y} \, \D S_y = \int_{B_r (x)}
\!\! \nabla^2_y \, u \, \D y = \int_{|y| < r} \!\! \nabla^2_x \, u \, (x+y) \, \D y
\label{Green}
\end{equation}
for every $x \in D$ and all $r \in (0, r (x))$. Combining this and \eqref{DD}, which
is equivalent to \eqref{FM'}, we obtain
\[ \int_{|y| < r} \!\! [ \nabla^2_x \, u + \lambda^2 u ] \, (x+y) \, \D y = 0 \ \ 
\mbox{for every} \ x \in D \ \mbox{and all} \ r \in (0, r (x)) \, .
\]
In the same way as in the proof of Theorem~4, this yields that $u$ is metaharmonic
in~$D$.
\end{proof}

Of course, if we assume from the beginning that $u \in C^2 (D)$, then the proof of
the last theorem reduces just to the concluding few lines beginning with formula
\eqref{Green}. However, the presented proof demonstrates that lesser smoothness of
$u$ is sufficient being combined with another rather weak condition that $M^\circ
(u, \cdot, r)$ and $F (u, \cdot, r)$ are $C^2$-functions for all sufficiently small
values of~$r$. Indeed, in his book \cite{J} published in 1955, John initiated
studies of the question how to recover a function from its mean values over spheres.
In particular, he established (see pp.~86 and 88) that for $m=2,3$ the continuity of
$u$ is guaranteed when $M^\circ (u, \cdot, r) \in C^2 (\RR^m)$. On the other hand,
it is shown in \cite{J} that $M^\circ$ is required to be of the class $C^m$ to imply
the continuity of $u$ in the case of even $m > 3$. Further references on this topic
can be found in the paper \cite{H}. Recovering a function from values of its mean
fluxes over spheres is an open question.

A by-product of the proof of Theorem 6 is the following.

\begin{corollary}
Let $D \subset \RR^m$, $m \geq 2$, be a bounded domain, and let $u \in C^2 (D)$ be
real-valued and metaharmonic in $D$. Then relation \eqref{cor} holds for all $x \in
D$ and all admissible balls with the integral operator $U$ defined by \eqref{pot}.
\end{corollary}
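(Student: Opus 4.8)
The plan is to exploit the fact that relation \eqref{cor} is not a new identity at all: it already surfaces midway through the proof of Theorem~6, so the only genuine task is to check that a $C^2$ metaharmonic function supplies the sole analytic input of that derivation and enough regularity to rerun it. That input is relation \eqref{FM'}, equivalently \eqref{DD}; everything after it is exact integration. I would therefore begin by confirming \eqref{DD} for our $u$ on every admissible ball. This is immediate: integrating the Helmholtz equation \eqref{Hh} over $B_r(x)$ and applying Green's first identity for the Laplacian yields $\int_{\partial B_r (x)} \partial u/\partial n_y\,\D S_y = -\lambda^2\int_{B_r (x)} u\,\D y$, which is \eqref{DD}, hence \eqref{FM'}, valid for all $x\in D$ and all admissible $r$.

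Next I would isolate exactly which ingredients of Theorem~6 are needed to reach \eqref{cor}, as opposed to its later consequences. Inspecting that proof, the $C^2$-regularity assumed there for $M^\circ(u,\cdot,r)$ and $F(u,\cdot,r)$ is used only \emph{after} \eqref{cor} has been obtained, in order to deduce that $u$ is itself $C^2$; the derivation of \eqref{cor} rests solely on $u\in C^1$, the boundedness of $u$ on the closure of the ball, and relation \eqref{FM'}. All three hold for a $C^2$ metaharmonic $u$, so there is no circularity and no need to reprove smoothness of the averages.

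With the hypotheses in place, I would simply replay the chain of computations from that proof: write \eqref{FM'} in its radial form, integrate it over $\rho\in(0,r)$, interchange the order of integration in the resulting double integral, integrate the kernel $\rho^{1-m}$ over $(|y|,r)$, and re-express the three surviving integrals through $M^\circ$, $F$ and $U$, using \eqref{DD} once more to trade $\int_{|y|<r} u(x+y)\,\D y$ for the flux. This produces \eqref{cor} for every $x\in D$ and every admissible ball, with no step depending on anything beyond metaharmonicity of $u\in C^2(D)$.

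The only point that needs care is the handling of the singular integral. The interchange of integration order is a routine application of Fubini's theorem, since $u$ is bounded on the closure of the admissible ball and the kernel $|y|^{2-m}$ of \eqref{pot} is locally integrable for $m\ge 3$, making $U(u,x,r)$ absolutely convergent and the factor $1/(m-2)$ harmless. For $m=2$ this kernel degenerates and $1/(m-2)$ is singular, so there \eqref{cor} must be read in the corresponding logarithmic form obtained by the same computation with $\int_{|y|}^r \rho^{-1}\,\D\rho = \log(r/|y|)$ replacing the power integral; this is the one case I would treat separately rather than as a direct specialisation.
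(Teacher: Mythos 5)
Your proposal is correct and takes essentially the same route as the paper, which presents this corollary exactly as a by-product of the proof of Theorem~6: verify \eqref{DD} (equivalently \eqref{FM'}) for metaharmonic $u$ via Green's first identity, then rerun the integration chain of that proof up to \eqref{cor}, whose steps indeed need only $u \in C^1$ together with \eqref{FM'}, so your observation that the $C^2$ assumptions on the averages enter only afterwards is accurate. Your separate treatment of $m=2$ is a point the paper glosses over: formula \eqref{cor} contains the factor $1/(m-2)$ and the kernel $|y|^{2-m}$ of \eqref{pot}, so for $m=2$ the identity must indeed be read in the logarithmic form with $\log(r/|y|)$, even though the paper states the corollary for all $m \geq 2$ without comment.
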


In Theorems 5 and 6, it is shown that each of relations \eqref{MM} and \eqref{FM'}
serves (under appropriate restrictions) as a sufficient condition for
metaharmonicity of a function. Since relations \eqref{FM} and \eqref{cor} have the
same type, it is reasonable to suppose that they are appropriate candidates as
sufficient conditions for metaharmonicity. Like \eqref{FM'}, either of these
relations involves the average flux~$F$. This conjecture will be considered in a
separate paper.

\section{Mean value properties for solutions of the modified \\ Helmholtz equation}

If $\lambda = \ii \mu$ and $\mu > 0$, then equation \eqref{Hh} takes the form
\begin{equation}
\nabla^2 v - \mu^2 v = 0 \label{MHh}
\end{equation}
and is referred to as the modified Helmholtz equation; we use $v$ to denote its
solution to distinguish it from the metaharmonic $u$.

Many properties of $v$ follow from general results about solutions of elliptic
equations in bounded domains. Indeed, the {\it strong maximum principle}\/ of
E.~Hopf (see \cite{GT}, p.~35) holds for $v$ as well as his boundary point lemma
(see \cite{GT}, p.~34). Since no important applications of \eqref{MHh} were known
until the late 1990s, this equation obtained much less attention than \eqref{Hh}.
For example, there is no mention of \eqref{MHh} in the classical book \cite{CH}, but
\eqref{Hh} is investigated in detail under the name of reduced wave equation (see
\cite{CH}, pp.~313--320), because of its role in studies of various time-harmonic
waves.

Let us outline some applications of equation \eqref{MHh} that appeared about 20
years ago. In the pioneering paper \cite{KPES} published in 1998 (see also
\cite{LWW}, \S~5.4), the authors studied the linearized water-wave problem that
describes interaction of a train of oblique waves with infinitely long cylinders
(surface-piercing or totally immersed). In this problem, equation \eqref{MHh} is
considered in a two-dimensional domain exterior to the cylinders' cross-sections;
its solution is unique for certain geometrical arrangements and wave parameters;
outside these regions of uniqueness, examples of non-uniqueness exist. Subsequently,
a version of the boundary integral equation method was applied for obtaining
numerical solutions of the problems that describe diffraction and radiation of
oblique waves by cylinders; see \cite{PPA}. Another application of \eqref{MHh} deals
with the diffusion-limited coalescence; in \cite{BAF}, this problem was reduced to
solving this equation in a triangular domain. Publications on the modified Helmholtz
equation that appeared during the past two decades demonstrate that properties of
the mean flux and average values of its solutions are worth the reader's attention.

In the paper \cite{Po} dating back to 1938, one finds the $m$-dimensional mean value
formula for spheres
\begin{equation}
\tilde a^\circ (\mu r) \, v (x) = \frac{1}{|\partial B_r|} \int_{\partial B_r (x)}
\!\! v (y) \, \D S_y = \widetilde M^\circ (v, x, r) \label{Gtil}
\end{equation}
(the last equality defines $\widetilde M^\circ$), where
\begin{equation}
\tilde a^\circ (\mu r) = \Gamma \left( \frac{m}{2} \right) \frac{I_{(m-2)/2} (\mu
r)}{(\mu r / 2)^{(m-2)/2}} \, . \label{atil}
\end{equation}
Here and below, $I_\nu$ denotes the modified Bessel function of order $\nu$. Formula
\eqref{Gtil} is valid for all $x$ in a bounded domain $D$ provided $B_r (x)$ is
admissible and $v \in C^2 (D)$ satisfies \eqref{MHh} in $D$. (The three- and
two-dimensional versions of this formula were obtained by Weber simultaneously with
\eqref{We} in his papers \cite{W1} and \cite{W2}, respectively; see also \cite{Po},
p.~199.)

Thus, the only distinction between \eqref{Gtil} and \eqref{a} is that $I_{(m-2)/2}$
replaces $J_{(m-2)/2}$. However, this is essential because the former function,
unlike the latter one, is positive and has exponential growth at infinity. Moreover,
in view of the equality 
\[ [z^{-\nu} I_\nu (z)]' = z^{-\nu} I_{\nu+1} (z)
\]
(see \cite{Wa}, p.~79), the function $\tilde a^\circ$ increases monotonically from
one to infinity on the interval $(0, \infty)$. The assertion converse to the mean
value property \eqref{Gtil} is as follows.

\begin{theorem}
Let $D$ be a bounded domain in $\RR^m$, $m \geq 2$, and let $v \in C^0 (D)$ be
real-valued. If equality \eqref{Gtil} with some $\mu > 0$ holds for every $x \in D$
and all $r \in (0, r (x))$, where $r (x) > 0$ is such that the ball $B_{r (x)} (x)$
is admissible, then $v$ satisfies \eqref{MHh} in $D$.
\end{theorem}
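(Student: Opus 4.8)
The plan is to mirror the proof of Theorem~4 almost verbatim, since the modified equation \eqref{MHh} is obtained from \eqref{Hh} by the substitution $\lambda = \ii \mu$, and the entire argument there was structural rather than sign-dependent. First I would establish smoothness of $v$ by Mikhlin's mollifier method, exactly as in Theorem~4. The key point that makes this step work is that $\tilde a^\circ (\mu r) > 0$ for all $r > 0$: whereas for the Helmholtz case one needed the restriction $\lambda \epsilon < j_{(m-2)/2,1}$ to guarantee positivity of $a^\circ$ on $(0, \epsilon)$, here the modified Bessel function $I_{(m-2)/2}$ is strictly positive on $(0, \infty)$, so $\tilde a^\circ (\mu r) > 0$ unconditionally. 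Thus multiplying \eqref{Gtil} by the mollifier $\omega_\epsilon (r)$, integrating over $(0, \epsilon)$, and dividing by the positive constant
\[
\tilde c (\mu, \epsilon) = \int_0^\epsilon \tilde a^\circ (\mu r) \, |\partial B_r| \, \omega_\epsilon (r) \, \D r > 0
\]
represents $v$ as a convolution with $\omega_\epsilon$, yielding $v \in C^\infty (D)$.

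Next I would integrate the spherical mean formula \eqref{Gtil} in $r$ to obtain the volume analogue of \eqref{Rh}. Here one applies the integral formula \eqref{PBM}, but with $J_\nu$ replaced by $I_\nu$; the relevant identity is $[z^\nu I_\nu (z)]' = z^\nu I_{\nu-1}(z)$, giving $\int_0^x x^{1+\nu} I_\nu (x) \, \D x = x^{1+\nu} I_{\nu+1}(x)$ with $\nu = (m-2)/2$. This produces a volume mean value identity whose coefficient involves $I_{m/2}(\mu r)$ in place of $J_{m/2}(\lambda r)$.

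Finally I would apply the Laplacian to the volume identity and use Green's first formula, precisely as in the last paragraph of the proof of Theorem~4. The computation of $\partial_r \widetilde M^\circ$ now invokes the differentiation rule $[z^{-\nu} I_\nu (z)]' = z^{-\nu} I_{\nu+1}(z)$ quoted just before the theorem statement, which carries the crucial sign change: differentiating $\tilde a^\circ (\mu r) \, v(x)$ produces $+ \mu I_{m/2}(\mu r) / (\mu r/2)^{(m-2)/2} \, v(x)$, the opposite sign to the Helmholtz case. Combining this with Green's formula and the volume identity leads to
\[
\int_{|y| < r} [ \nabla^2_x \, v - \mu^2 v ] \, (x+y) \, \D y = 0
\]
for every $x \in D$ and all admissible $r$, where the $-\mu^2$ (rather than $+\lambda^2$) emerges directly from the sign difference in the differentiation formula for $I_\nu$. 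The mean value theorem for integrals then furnishes a point $y(r,x) \in B_r(x)$ annihilating the integrand, and letting $r \to 0$ gives $\nabla^2 v - \mu^2 v = 0$ at $x$ by continuity.

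I do not anticipate a genuine obstacle here, since the positivity of $\tilde a^\circ$ actually removes the only delicate restriction present in Theorem~4; the sole point demanding care is the consistent substitution of the modified Bessel recurrence and differentiation identities so that the sign of the zeroth-order term flips correctly to reproduce \eqref{MHh} rather than \eqref{Hh}.
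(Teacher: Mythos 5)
Your proposal is correct and follows essentially the same route as the paper, whose proof of this theorem is literally the statement that it repeats the proof of Theorem~4; you carry out exactly that adaptation, with the right bookkeeping: the sign flip in $[z^{-\nu} I_\nu(z)]' = z^{-\nu} I_{\nu+1}(z)$ producing $-\mu^2$ in place of $+\lambda^2$, the integration step matching the paper's use of formula 1.11.1.5 of \cite{PBM} to obtain \eqref{Rh'}, and the correct observation that positivity of $I_{(m-2)/2}$ makes Mikhlin's mollifier argument work without the restriction $\lambda \epsilon < j_{(m-2)/2,1}$ needed in the oscillatory case.
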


The proof is literally the same as that of Theorem~4. Along with relation
\eqref{Gtil}, solutions of \eqref{MHh} satisfy the mean value property for balls. To
obtain it we integrate \eqref{Gtil} with respect to $r$, which yields:
\begin{equation}
\left( \frac{2 \pi r}{\mu} \right)^{m/2} I_{m/2} (\mu r) \, v (x) = \int_{|y| < r}
\!\! v (x+y) \, \D y . \label{Rh'}
\end{equation}
Here, we used formula 1.11.1.5, \cite{PBM}, with the upper sign and $\nu = (m-2)/2$.
Equality \eqref{Rh'} can be written in the form
\begin{equation}
\tilde a^\bullet (\mu r) \, v (x) = \frac{1}{|B_r|} \int_{B_r (x)} \!\! v (y) \, \D
y = \widetilde M^\bullet (v, x, r) \label{G'til}
\end{equation}
(the mean value for balls $\widetilde M^\bullet$ is defined by the second equality)
analogous to \eqref{Gtil}; here
\begin{equation}
\tilde a^\bullet (\mu r) = \Gamma \left( \frac{m}{2} + 1 \right) \frac{I_{m/2} (\mu
r)}{(\mu r / 2)^{m/2}} \, . \label{a'til}
\end{equation}
This function has the same properties as $\tilde a^\circ$. The assertion converse to
the mean value property \eqref{G'til} is also true.

Furthermore, a consequence of \eqref{Gtil}, \eqref{atil}, \eqref{G'til} and
\eqref{a'til} is the relation between the mean values for spheres and balls
analogous to \eqref{MM}:
\begin{equation}
m I_{m/2} (\mu r) \widetilde M^\circ (v, x, r) = \mu r I_{(m-2)/2} (\mu r)
\widetilde M^\bullet (v, x, r) \, . \label{MMtil}
\end{equation}
The proof of the following theorem is analogous to that of Theorem~5 and is even
simpler because the coefficients at $\widetilde M^\circ (v, x, r)$ and $\widetilde
M^\bullet (v, x, r)$ do not vanish.

\begin{theorem}
Let $D \subset \RR^m$, $m \geq 2$, be a bounded domain and let $v \in C^0 (D)$ be
real-valued. If equality \eqref{MMtil} holds for all $x \in D$ and all $r \in (0, r
(x))$, where $r (x) > 0$ is such that the ball $B_{r (x)} (x)$ is admissible, then
$v$ is a solution of \eqref{MHh} in $D$.
\end{theorem}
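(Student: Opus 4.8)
The plan is to follow the proof of Theorem~5 almost verbatim, replacing $J_\nu$ by $I_\nu$ throughout. First I would record the purely geometric identity relating the two averages. Fix $\rho > 0$ small; for $r \in (0, \rho)$ the mean $\widetilde M^\bullet (v, x, r)$ is defined on an open subdomain of $D$, and since $v \in C^0 (D)$ it is differentiable in $r$ with
\[ \partial_r \widetilde M^\bullet (v, x, r) = m r^{-1} \big[ \widetilde M^\circ (v, x, r) - \widetilde M^\bullet (v, x, r) \big] \, . \]
This follows by writing $\widetilde M^\bullet$ as the normalised $r$-average of $s \mapsto s^{m-1} \widetilde M^\circ (v, x, s)$ and differentiating. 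Substituting the hypothesis \eqref{MMtil} to eliminate $\widetilde M^\circ$ turns this into the linear first-order equation
\[ \partial_r \widetilde M^\bullet = \left[ \mu \frac{I_{(m-2)/2} (\mu r)}{I_{m/2} (\mu r)} - \frac{m}{r} \right] \widetilde M^\bullet \, . \]

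Next I would simplify the coefficient. Using the recurrence $I_{\nu - 1} (z) = I_\nu' (z) + (\nu / z) I_\nu (z)$ (see \cite{Wa}, p.~79) with $\nu = m/2$ and $z = \mu r$, the bracket becomes
\[ \mu \frac{I_{m/2}' (\mu r)}{I_{m/2} (\mu r)} - \frac{m}{2 r} = \frac{\D}{\D r} \ln \frac{I_{m/2} (\mu r)}{r^{m/2}} \, . \]
Hence the function $r \mapsto r^{m/2} \widetilde M^\bullet (v, x, r) / I_{m/2} (\mu r)$ has vanishing derivative and is therefore constant on $(0, r(x))$. I would fix the value of the constant by letting $r \to 0$: continuity of $v$ gives $\widetilde M^\bullet (v, x, r) \to v (x)$, while $I_{m/2} (\mu r) \sim (\mu r / 2)^{m/2} / \Gamma (m/2 + 1)$. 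This yields $\widetilde M^\bullet (v, x, r) = \tilde a^\bullet (\mu r) \, v (x)$ for every $x \in D$ and all admissible $r$, that is, \eqref{G'til}, which is equivalent to \eqref{Rh'}.

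Finally, \eqref{Rh'} is the mean value property for balls. Differentiating it in $r$ with the help of $[z^\nu I_\nu (z)]' = z^\nu I_{\nu - 1} (z)$ (see \cite{Wa}, p.~79) and $\nu = m/2$ recovers the spherical mean value property \eqref{Gtil}; since $v \in C^0 (D)$ is real-valued and \eqref{Gtil} now holds for every $x \in D$ and all small $r$, Theorem~8 applies and shows that $v$ solves \eqref{MHh} in $D$. Equivalently, one may invoke directly the converse of \eqref{G'til} asserted in the text.

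I expect no serious obstacle, and indeed the remark preceding the statement already explains why: the modified Bessel function $I_\nu$ is strictly positive on $(0, \infty)$, so unlike in Theorem~5 the coefficient $I_{m/2} (\mu r)$ never vanishes and the integration is carried out on the whole interval $(0, r(x))$ without excising zeros or patching by continuity. The only point requiring a little care is the singular endpoint $r \to 0$, where the small-argument asymptotics of $I_{m/2}$ are needed to identify the constant; this is routine. Isolating $r^{m/2} \widetilde M^\bullet / I_{m/2} (\mu r)$ as the quantity that is constant—rather than manipulating logarithmic derivatives—also sidesteps any concern about $\widetilde M^\bullet$ or $v (x)$ vanishing, so no separate continuity argument is required to complete the proof.
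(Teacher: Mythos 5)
Your proposal is correct and follows exactly the route the paper intends for this theorem (the paper gives no separate proof, stating only that it is analogous to Theorem~5 and simpler because the coefficients involving $I_{(m-2)/2}$ and $I_{m/2}$ never vanish); in fact your integrating-factor formulation, which shows $r^{m/2}\widetilde M^\bullet (v,x,r)/I_{m/2}(\mu r)$ is constant rather than integrating logarithmic derivatives, is a small improvement, since it also avoids dividing by $\widetilde M^\bullet$ and thereby eliminates the continuity-patching step that the paper's Theorem~5 argument required. One slip to fix: the converse spherical mean value theorem you invoke at the end is Theorem~7, not Theorem~8 (the latter is the statement being proved, so citing it would be circular); your alternative route via the converse of \eqref{G'til}, asserted in the text, is equally valid.
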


The next assertion is similar to Theorem 2 and its proof is literally the~same.

\begin{theorem}
Let $v \in C^2 (D)$ satisfy \eqref{MHh} in a domain $D \subset \RR^m$, $m \geq 2$.
Then
\begin{equation}
\int_{\partial B_r (x)} \frac{\partial v}{\partial n_y} \, \D S_y = \mu^2 \left(
\frac{2 \pi r}{\mu} \right)^{m/2} I_{m/2} (\mu r) \, v (x) \label{T2'}
\end{equation}
for every admissible ball $B_r (x)$.
\end{theorem}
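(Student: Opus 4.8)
The plan is to mirror the proof of Theorem~2 step for step, the only changes being the sign produced by the substitution $\lambda = \ii \mu$ and the replacement of $J_{m/2}$ by the modified Bessel function $I_{m/2}$. First I would integrate the modified Helmholtz equation \eqref{MHh} over an admissible ball $B_r (x)$, writing
\[
\int_{B_r (x)} \!\! \nabla^2 v \, \D y = \mu^2 \int_{B_r (x)} \!\! v \, \D y \, .
\]
Applying Green's first identity for the Laplacian to the left-hand side converts the volume integral of $\nabla^2 v$ into the flux of $v$ through $\partial B_r (x)$, yielding
\[
\int_{\partial B_r (x)} \frac{\partial v}{\partial n_y} \, \D S_y = \mu^2 \int_{B_r (x)} \!\! v \, \D y \, .
\]
In contrast to \eqref{DD}, the coefficient on the right is $+\mu^2$ rather than $-\lambda^2$, precisely because $-\lambda^2 = \mu^2$ under $\lambda = \ii \mu$.

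The second step is to eliminate the volume integral using the mean value property for balls. The identity \eqref{Rh'}, already established for solutions of \eqref{MHh}, gives
\[
\int_{|y| < r} \!\! v (x+y) \, \D y = \left( \frac{2 \pi r}{\mu} \right)^{m/2} I_{m/2} (\mu r) \, v (x) \, .
\]
Substituting this into the previous display produces \eqref{T2'} immediately, and the argument is complete.

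I expect no genuine obstacle here: the proof is a two-line computation that reuses Green's first identity together with the ball formula \eqref{Rh'}, which stands to \eqref{MHh} exactly as \eqref{Rh} stands to \eqref{Hh}. The only point requiring the slightest care is the bookkeeping of the sign: since the modified equation carries $-\mu^2 v$ rather than $+\lambda^2 u$, moving the Laplacian to the other side yields $+\mu^2$, so the final flux equals a positive multiple of $I_{m/2} (\mu r) \, v (x)$ rather than a negative multiple of $J_{m/2} (\lambda r) \, u (x)$. Because $I_{m/2}$ is positive on $(0, \infty)$, one might append the remark that the flux of a nonvanishing solution of \eqref{MHh} never vanishes for $\mu > 0$ --- a qualitative contrast with the oscillatory behaviour of the metaharmonic flux in \eqref{Dm}.
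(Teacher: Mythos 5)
Your proposal is correct and coincides with the paper's intended argument: the paper states that the proof of this theorem ``is literally the same'' as that of Theorem~2, i.e.\ integrate \eqref{MHh} over the admissible ball, apply Green's first identity to obtain the flux equal to $\mu^2 \int_{B_r(x)} v \, \D y$, and then substitute the ball mean value formula \eqref{Rh'}, exactly as you do. Your sign bookkeeping ($-\lambda^2 = \mu^2$ under $\lambda = \ii\mu$) and the closing remark on the non-vanishing of $I_{m/2}$ are both accurate.
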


Moreover, the converse assertion is similar to Theorem 3 and the following form of
relation \eqref{T2'} is analogous to \eqref{Gtil}:
\begin{equation}
\mu \Gamma \left( \frac{m}{2} \right) \frac{I_{m/2} (\mu r)}{(\mu r / 2)^{(m-2)/2}}
\, v (x) = \frac{1}{|\partial B_r|} \int_{\partial B_r (x)} \frac{\partial
v}{\partial n_y} \, \D S_y = \widetilde F (v, x, r) \label{Ftil}
\end{equation}
(the last equality defines the mean flux $\widetilde F$). Here the coefficient at
$v$ tends to zero as $\mu \to 0$, and so this equality turns into the zero flux
property of harmonic functions in the limit. Furthermore, combining \eqref{Ftil} and
\eqref{G'til}, \eqref{a'til}, one obtains that
\begin{equation*}
m \widetilde F (v, x, r) = \mu^2 r \widetilde M^\bullet (v, x, r) \, .
\end{equation*}
On the basis of this relation a theorem similar to Theorem~6 can be proved;
moreover, the proof yields the following equality
\begin{equation*}
v (x) = m \widetilde M^\circ (v, x, r) - \frac{m \, r}{m-2} \widetilde F (v, x, r) -
\frac{m \, \mu^2}{(m-2) |S^{m-1}|} \, U (v, x, r)
\end{equation*}
analogous to \eqref{cor}.

{\small

}


\begin{thebibliography}{99}

\bibitem{ABR} Axler, S., Bourdon, P., Ramey, W., \emph{Harmonic function theory},
2nd ed., Springer-Verlag, New York, 2001.

\bibitem{BR} Beckenbach, E. F., Reade, M., \emph{Mean values and harmonic
polynomials}, Trans. Amer. Math. Soc. {\bf 53} (1943), 230--238.

\bibitem{BAF} ben-Avraham, D., Fokas, A. S., \emph{Solution of the modified
Helmholtz equation in a triangular domain and an application to diffusion-limited
coalescence}, Phys. Rev. E {\bf 64} (2001), 016114.

\bibitem{Bo} B\^ocher, M., \emph{On harmonic functions in two dimensions}, Proc.
Amer. Acad. Arts and Sci. {\bf 41} (1906), 557--583.

\bibitem{CH} Courant, R., Hilbert, D., \emph{Methods of mathematical physics, vol.
2, Partial differential equations}, Wiley--Interscience, New York, 1962.

\bibitem{E} Estrada, R., \emph{Characterization of harmonic functions by the
behavior of means at a single point}, SN Partial Differ. Equ. Appl. {\bf 1}:2
(2020), 13~pp.

\bibitem{GT} Gilbarg, D., Trudinger, N. S., \emph{Elliptic partial differential
equations of second order}, 2nd ed., Springer, Berlin, 1983.

\bibitem{H} Haltmeier, M., \emph{Universal inversion formulas for recovering a
function from spherical means}, SIAM J. Math. Anal. {\bf 46} (2014), 214--232.

\bibitem{Helm} Helmholtz, H., \emph{Theorie der Luftschwingungen in R\"ohren mit
offenen Enden}, J. reine angew. Math. {\bf 57} (1860), 1--72.

\bibitem{J} John, F., \emph{Plane waves and spherical means applied to partial
differential equations}, Interscience, New York, 1955.

\bibitem{K} Kellogg, O. D., \emph{Foundations of potential theory}, Springer,
Berlin, 1929.

\bibitem{Ko} Koebe, P., \emph{Herleitung der partiellen Differentialgleichungen der
Potentialfunktion aus deren Integraleigenschaft}, Sitzungsber. Berlin. Math.
Gessellschaft {\bf 5} (1906), 39--42.

\bibitem{Ku} Kuznetsov, N., \emph{Mean value properties of harmonic functions and
related topics (a survey)}, J. Math. Sci. {\bf 242} (2019), 177--199.

\bibitem{LWW} Kuznetsov, N., Maz'ya, V., Vainberg, B., \emph{Linear water waves: a
mathematical approach}, Cambridge Univ. Press, Cambridge, 2002.

\bibitem{KPES} Kuznetsov, N., Porter, R., Evans, D. V., Simon, M. J.,
\emph{Uniqueness and trapped modes for surface-piercing cylinders in oblique waves},
J. Fluid Mech. {\bf 365} (1998), 351--368.

\bibitem{M} Mikhlin, S. G., \emph{Mathematical physics, an advanced course},
North-Holland Publishing Co., Amsterdam--London, 1970.

\bibitem{PPA} Politis, C. G., Papalexandris, M. V., Athanassoulis, G. A., \emph{A
boundary integral equation method for oblique water-wave scattering by cylinders
governed by the modified Helmholtz equation}, Appl. Ocean Res. {\bf 24} (2002),
215--233.

\bibitem{Po} Poritsky,  H., \emph{Generalizations of the Gauss law of the spherical
mean}, Trans. Amer. Math. Soc. {\bf 43} (1938), 199--225.

\bibitem{PBM} Prudnikov, A. P., Brychkov, Yu. A., Marichev, O. I., \emph{Integrals
and series. V.~2, Special functions}, Gordon \& Breach, New York et al., 1986.

\bibitem{Ve1} Vekua, I. N., \emph{Metaharmonic functions}, Trudy Tbilisskogo Matem.
Inst. {\bf 12} (1943), 105--175 [in Russian]. English transl.: Appendix 2 in
\cite{Ve2}.

\bibitem{Ve2} Vekua, I. N., \emph{New methods for solving elliptic equations},
North Holland, Amsterdam, 1967.

\bibitem{V} Vladimirov, V. S., \emph{Equations of mathematical physics}, Marcel
Dekker, New York, 1971.

\bibitem{Wa} Watson, G. N., \emph{A treatise on the theory of Bessel functions}, 2nd
ed., Cambridge University Press, Cambridge, 1944.

\bibitem{W1} Weber, H., \emph{Ueber einige bestimmte Integrale}, J. reine angew.
Math. {\bf 69} (1868), 222--237.

\bibitem{W2} Weber, H., \emph{Ueber die Integration der partiellen
Differentialgleichung:} $\frac{\partial^2 u}{\partial x^2} + \frac{\partial^2
u}{\partial y^2} + k^2 u = 0$, Math. Ann. {\bf 1} (1869), 1--36.

\end{thebibliography}
\end{document}